\newtheorem{theorem}{Theorem}[section]
\newtheorem{corollary}[theorem]{Corollary}
\newtheorem{proposition}[theorem]{Proposition}
 \theoremstyle{definition}
\numberwithin{equation}{section}
\newcommand {\Z}{\mathbb{Z}} 
\newcommand {\R}{\mathbb{R}} 
\newcommand {\Q}{\mathbb{Q}} 
\newcommand{\T}{\mathbb{T}}
   \DeclareMathOperator{\GL}{GL}
\DeclareMathOperator{\M}{M}
\begin{document}
\title[A Garden of Eden theorem for Anosov diffeomorphisms]{A Garden of Eden theorem for Anosov diffeomorphisms on tori}
\author{Tullio Ceccherini-Silberstein}
\address{Dipartimento di Ingegneria, Universit\`a del Sannio, C.so
Garibaldi 107, 82100 Benevento, Italy}
\email{tceccher@mat.uniroma3.it}
\author{Michel Coornaert}
\address{Institut de Recherche Math\'ematique Avanc\'ee,
UMR 7501,                                             Universit\'e  de Strasbourg et CNRS,
                                                 7 rue Ren\'e-Descartes,
                                               67000 Strasbourg, France}
\email{coornaert@math.unistra.fr}
\subjclass[2010]{37D20, 37C29, 37B40, 37B10, 37B15}
\keywords{Garden of Eden, Moore property, Myhill property, Anosov diffeomorphism, hyperbolic toral automorphism, expansive homeomorphism, subshift of finite type, topological mixing, Axiom A diffeomorphism, basic set}
\begin{abstract}
Let $f$ be an Anosov diffeomorphism of the $n$-dimensional torus $\T^n$ 
and   $\tau$  a continuous self-mapping of $\T^n$  commuting with $f$.
We prove that $\tau$ is surjective   
if and only if the restriction of $\tau$ to each homoclinicity class of $f$ is injective.
\end{abstract}
\date{\today}
\maketitle

\section{Introduction}

The   Garden of Eden theorem,
originally established by Moore~\cite{moore} and Myhill~\cite{myhill} in the early 1960s, is an  important result in symbolic dynamics and coding theory. 
It provides a necessary and sufficient condition for a cellular automaton to be surjective.
More specifically, consider  a finite set $A$ and  the set $A^\Z$ consisting of all bi-infinite sequences $x = (x_i)$ with $x_i \in  A$ for all $i \in \Z$.
We equip $A^\Z$ with its \emph{prodiscrete topology}, that is, with the 
topology of pointwise convergence (this is also the product topology obtained by taking the discrete topology on each factor $A$ of $A^\Z$).
A \emph{cellular automaton} is a continuous map $\tau \colon A^\Z \to A^\Z$ that commutes with the shift homeomorphism $\sigma \colon A^\Z \to A^\Z$ 
given by $\sigma(x) = (x_{i - 1})$ for all  $x = (x_i) \in  A^\Z$.
Two sequences $x = (x_i),y  = (y_i) \in A^\Z$ are said to be \emph{almost equal} if one has 
$x_i = y_i$ for all but finitely many $i \in \Z$.
A cellular automaton $\tau \colon A^\Z \to A^\Z$ is called \emph{pre-injective}
if there exist no distinct sequences $x, y \in A^\Z$ that are almost equal and satisfy 
$\tau(x) = \tau(y)$.
The Moore-Myhill Garden of Eden theorem states that a cellular automaton 
$\tau \colon A^\Z \to A^\Z$ is surjective if and only if it is pre-injective.
The implication surjective $\Rightarrow$ pre-injective was first established by Moore~\cite{moore},
and Myhill~\cite{myhill} proved the converse implication shortly after.
\par
The Moore-Myhill Garden of Eden theorem has been extended in several directions.
There are now  versions of it for 
cellular automata over amenable groups \cite{machi-mignosi}, \cite{ceccherini},
cellular automata over subshifts \cite{gromov-esav}, \cite{fiorenzi-sofic}, 
\cite{Fiorenzi-strongly},
and linear cellular automata over linear shifts and subshifts \cite{csc-linear-goe}, 
\cite{cc-goe-lin-sub}
(the reader is refered to the monograph \cite{book} for a detailed exposition of some of these extensions, as well as historical comments and additional references).
\par
In this note, we  present an analogue of the Garden of Eden theorem for Anosov diffeomorphisms on tori.
This reveals one more connection between symbolic dynamics and the theory of smooth dynamical systems.
Actually our motivation came from a phrase of Gromov
\cite[p.~195]{gromov-esav}
which mentioned
the possibility of extending the Garden of Eden theorem to a suitable class of hyperbolic dynamical systems.
\par
Let $(X,f)$ be a dynamical system consisting of a compact metrizable space $X$ equipped with  a homeomorphism $f \colon X \to X$.
Two points in $X$ are called $f$-\emph{homoclinic} if their $f$-orbits are asymptotic both in the past and the future
(see Section~\ref{sec:background} for a precise definition). 
Homoclinicity  defines an equivalence relation on $X$.
 An \emph{endomorphism} of the dynamical system $(X,f)$ is
a continuous map $\tau \colon X \to X$ commuting with $f$.
We say that an endomorphism $\tau$ of $(X,f)$ is
\emph{pre-injective} (with respect to $f$) if
the restriction of $\tau$ to each $f$-homoclinicity class is injective (i.e., 
there is no pair of distinct $f$-homoclinic points  in $X$ having the same image under 
$\tau$)
(in the particular case when $X = A^\Z$ and $f = \sigma$ is the shift homeomorphism,
the endomorphisms of $(X,f)$ are precisely the cellular automata 
and this definition of pre-injectivity is equivalent to the one given above,
see e.g. \cite[Proposition~2.5]{csc-myhyp}). 
We  say that the dynamical system $(X,f)$ has the \emph{Moore property} if every surjective endomorphism of $(X,f)$ is pre-injective 
and that $(X,f)$ has the \emph{Myhill property}
if every pre-injective endomorphism of $(X,f)$  is surjective.
We say that the dynamical system $(X,f)$ has the \emph{Moore-Myhill property},
or that it satisfies the \emph{Garden of Eden theorem}, 
if $(X,f)$  has both the Moore and the Myhill properties.
\par
A $C^1$-diffeomorphism  $f$ of a compact $C^r$-differentiable ($r \geq 1$) manifold $M$ is called an \emph{Anosov diffeomorphism} if
the tangent bundle of $M$ splits as a direct sum $TM = E_s \oplus E_u$ 
of two invariant subbundles $E_s$ and $E_u$ such that, with respect to some (or equivalently any) Riemannian metric on $M$, the differential $df$  is uniformly contracting on $E_s$ and uniformly expanding on $E_u$ 
(see~\cite{smale}, \cite{brin-stuck}, \cite{dgs_ergodic-theory},  
\cite{kh-modern-theory-ds},  \cite{shub-global-stability}).
\par
Our main result is the following.

\begin{theorem}[Garden of Eden theorem for toral Anosov diffeomorphisms]
\label{t:anosov-torus}
Let $f$ be an Anosov diffeomorphism of the $n$-dimensional torus $\T^n$.
Then the dynamical system $(\T^n,f)$ has  the Moore-Myhill property.
In other words, if $\tau \colon \T^n \to \T^n$ is a continuous map commuting with $f$, then $\tau$ is surjective if and only if 
the restriction of $\tau$ to each homoclinicity class of $f$ is injective.
\end{theorem}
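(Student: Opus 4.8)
The plan is to reduce the theorem, via the classification of toral Anosov diffeomorphisms, to an elementary assertion about integer matrices. By the Franks--Manning theorem (see, e.g., \cite{kh-modern-theory-ds}), $f$ is topologically conjugate to a hyperbolic toral automorphism: there are a homeomorphism $h\colon\T^n\to\T^n$ and a matrix $A\in\GL_n(\Z)$ with no eigenvalue on the unit circle such that $h\circ f=A\circ h$, where we also write $A$ for the induced automorphism of $\T^n=\R^n/\Z^n$. Since conjugation by $h$ preserves surjectivity and, by uniform continuity of $h$ and $h^{-1}$, carries homoclinicity classes of $f$ onto homoclinicity classes of $A$, it suffices to treat the case $f=A$. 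So from now on let $\tau\colon\T^n\to\T^n$ be continuous with $\tau\circ A=A\circ\tau$.

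First I would show that such a $\tau$ is affine. Lift $\tau$ to a continuous $T\colon\R^n\to\R^n$; the induced endomorphism $M$ of $\pi_1(\T^n)=\Z^n$ is an integer $n\times n$ matrix with $T(x+v)=T(x)+Mv$ for $v\in\Z^n$, and the commutation relation gives $MA=AM$ and, since $T\circ A$ and $A\circ T$ are two lifts of $\tau\circ A=A\circ\tau$, also $T(Ax)=AT(x)+w$ for a fixed $w\in\Z^n$. Then $\psi(x):=T(x)-Mx$ is $\Z^n$-periodic, hence bounded, and satisfies $\psi(Ax)=A\psi(x)+w$; setting $\Psi:=\psi+(A-I)^{-1}w$ (legitimate since $1$ is not an eigenvalue of $A$) one gets $\Psi(A^{k}x)=A^{k}\Psi(x)$ for all $k\in\Z$, and since a hyperbolic linear map has no nonzero orbit bounded in both time directions, boundedness of $\Psi$ forces $\Psi\equiv 0$. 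Hence $\psi$ is constant and $\tau(x)=Mx+b$ for some $b\in\T^n$. Therefore $\tau$ is surjective if and only if $\det M\neq 0$; and, writing the homoclinicity class of $x$ as $x+\Delta$ where $\Delta\leq\T^n$ is the subgroup of points homoclinic to $0$ (note $M(\Delta)\subseteq\Delta$, since $M$ is continuous, fixes $0$, and commutes with $A$), the restriction of $\tau$ to each homoclinicity class is injective if and only if $M|_\Delta$ is injective. Thus the theorem reduces to the assertion: \emph{$\det M\neq 0$ if and only if $M|_\Delta$ is injective.}

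For the ``Moore'' direction ($\det M\neq 0\Rightarrow M|_\Delta$ injective), let $\delta\in\Delta$ with $M\delta=0$ in $\T^n$, and lift $\delta$ to $\tilde\delta\in\R^n$; then $M\tilde\delta\in\Z^n$, so $\tilde\delta=M^{-1}(M\tilde\delta)\in\Q^n$ and $\delta$ is a rational point of $\T^n$, hence an $A$-periodic point, say $A^{N}\delta=\delta$. Being also homoclinic to $0$, $\delta=\lim_{j\to\infty}A^{jN}\delta=0$. For the ``Myhill'' direction, assume $\det M=0$. First, $\Z^n\cap E_s=\Z^n\cap E_u=\{0\}$: if $\Z^n\cap E_s$ were nonzero, its real span would be a nonzero $A$-invariant rational subspace of $E_s$ on which $A$ acts unimodularly (preserving a lattice) while all its eigenvalues lie inside the unit disk, which is absurd; likewise for $E_u$. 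Let $p\colon\R^n\to E_s$ be the projection along $E_u$; it is a polynomial in $A$, hence commutes with $M$. Pick a nonzero $v\in\Z^n$ lying in $\Ker M$ (possible, since $\Ker M$ is a nonzero rational subspace). Then $Mp(v)=p(Mv)=0$, and $\overline{p(v)}\in\T^n$ is nonzero: $p(v)\neq 0$ because $\Ker p=E_u$ meets $\Z^n$ only in $\{0\}$, and $p(v)\notin\Z^n$ because $0\neq p(v)\in E_s$ while $\Z^n\cap E_s=\{0\}$. Moreover $\overline{p(v)}$ is homoclinic to $0$: its forward $A$-orbit tends to $0$ since $p(v)\in E_s$, and, writing $\overline{p(v)}=\overline{-(v-p(v))}$ with $v-p(v)\in E_u$, so does its backward orbit. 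Hence $M$ kills a nonzero element of $\Delta$, so $M|_\Delta$ is not injective.

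The hard part, in my view, is the affineness step of the second paragraph, together with the reliance on the Franks--Manning classification that makes the reduction to linear algebra possible; once $\tau$ is known to be affine, the remaining arguments are elementary lattice geometry. (Alternatively, one could route the whole proof through Markov partitions, presenting $(\T^n,f)$ as a topologically mixing subshift of finite type and arguing with entropy — the natural framework if one also wants to handle general basic sets of Axiom A diffeomorphisms.)
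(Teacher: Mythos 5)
Your proof is correct, and while it follows the same overall reduction as the paper (Franks--Manning, then affineness, then linear algebra), it differs in two substantive ways. First, you prove the affineness of $\tau$ directly by the lift-and-hyperbolicity argument ($\psi(Ax)=A\psi(x)+w$, kill the constant with $(A-I)^{-1}w$, then use that a hyperbolic linear map has no nonzero bounded two-sided orbits); the paper instead simply cites Walters' theorem that every endomorphism commuting with an ergodic (in particular hyperbolic) toral automorphism is affine. Your argument is self-contained but uses hyperbolicity essentially, whereas Walters' result covers the more general ergodic case, which the paper exploits later to get the Moore property for all ergodic toral automorphisms. Second, and more interestingly, your Myhill half is genuinely different: the paper uses Lind--Schmidt's identification $\Delta(f)\cong\Z^n$ together with density of $\Delta(f)$ in $\T^n$, deducing that the image of an injective restriction has finite index, hence dense image, hence surjectivity via a closed-subgroup argument; you instead prove the contrapositive by exhibiting an explicit nonzero homoclinic point in the kernel, namely $\overline{p(v)}$ for $0\neq v\in\Z^n\cap\Ker M$, where $p$ is the spectral projection onto $E_s$ along $E_u$ (a polynomial in $A$, hence commuting with $M$), after checking $\Z^n\cap E_s=\Z^n\cap E_u=\{0\}$ by the unimodularity-versus-contraction argument. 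This buys a completely elementary, citation-free proof of the Myhill direction (in effect you re-derive the piece of the Lind--Schmidt description of $\Delta(f)$ that is actually needed), at the cost of being tied to the linear model, while the paper's density/finite-index argument is softer and closer to the general algebraic framework. Your Moore half coincides with the paper's (rationality of the coincidence point, hence periodicity, and periodic homoclinic points coincide). The only step you assert without justification is that surjectivity of $\tau(x)=Mx+b$ forces $\det M\neq 0$; this is standard, but it does need a one-line argument -- the paper uses Baire category (a proper affine subspace plus $\Z^n$ cannot cover $\R^n$); alternatively use Lebesgue measure or Pontryagin duality -- so you should add that line.
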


The paper is organized as follows.
In Section 2, we fix notation and present some background material on dynamical systems.
In Section 3, we establish Theorem~\ref{t:anosov-torus}.
The proof uses two classical results in the theory of hyperbolic dynamical systems.
The first one is the Franks-Manning theorem~\cite{franks}, \cite{manning},
which states that any Anosov diffeomorphism on $\T^n$ is topologically conjugate to a hyperbolic toral automorphism. The second is a theorem due
to Walters~\cite{walters}
which asserts that all endomorphisms of a hyperbolic toral endomorphism are affine.
This allows us to reduce the proof to an elementary question in linear algebra. 
In the final section, we discuss some examples and give an extension of the Myhill implication of the Garden of Eden theorem for topologically
mixing basic sets of Axiom A diffeomorphisms.

\section{Background}
\label{sec:background}

In this section, we review some basic facts about dynamical systems.  For more details, the reader is referred to the monographs \cite{brin-stuck}, \cite{dgs_ergodic-theory}, \cite{kh-modern-theory-ds}, \cite{lind-marcus}, and~\cite{shub-global-stability}. 

\subsection{Dynamical systems}
Throughout  this paper, by a \emph{dynamical system}, we mean a pair  $(X,f)$, where
$X$ is a compact metrizable space  and  
 $f \colon X \to X$ is  a homeomorphism.
 Sometimes, we shall simply write $f$ or $X$ instead of $(X,f)$ if there is no risk of confusion. 
We denote by $d$   a metric  on $X$ that is compatible with the topology.
\par
The \emph{orbit} of a point $x \in X$ is the set 
$\{f^n(x) : n \in \Z\} \subset X$. The point $x$ is called \emph{periodic} if its orbit is finite.
A subset $Y \subset X$ is said to be \emph{invariant} if $f(Y) = Y$.
If $Y \subset X$ is an invariant subset, we denote by $f\vert_Y$ the restriction of $f$ to 
$Y$, i.e., the map
$f\vert_Y \colon Y \to Y$ given by $f\vert_Y(y)  := f(y)$ for all $y \in Y$.
\par
One says that the dynamical systems $(X,f)$ and $(Y,g)$ are \emph{topologically conjugate} if there exists a homeomorphism $\varphi \colon X \to Y$ such that 
$\varphi \circ f = g \circ \varphi$.

\subsection{Homoclinicity}
Two points $x, y \in X$ are called \emph{homoclinic} with respect to 
$f$ (or $f$-\emph{homoclinic}) if one has
$d(f^n(x),f^n(y)) \to 0$ as $|n| \to \infty$.
Homoclinicity is an equivalence relation on $X$.
By compactness, 
this equivalence relation  is independent  of the choice of the metric $d$.

\begin{proposition}
\label{p:periodic-homoclinic}
Let $(X,f)$ be a dynamical system.
Suppose that  $x$ and $y$ are periodic points of $f$.
If $x$ and $y$ are $f$-homoclinic, then $x = y$.
\end{proposition}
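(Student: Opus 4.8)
The plan is to exploit the tension between the periodicity of $x$ and $y$ (which forces the orbits to return exactly to their starting points infinitely often) and the homoclinicity condition (which forces the distance between corresponding orbit points to vanish at infinity). The key observation is that along a well-chosen arithmetic progression of times, the quantity $d(f^n(x), f^n(y))$ is simply the constant $d(x,y)$, and a constant sequence that tends to $0$ must be $0$.

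Concretely, first I would produce a common period: since $x$ is periodic there is an integer $p_1 \geq 1$ with $f^{p_1}(x) = x$, and similarly there is $p_2 \geq 1$ with $f^{p_2}(y) = y$; setting $p := p_1 p_2$ (or the least common multiple) we get $f^{p}(x) = x$ and $f^{p}(y) = y$, hence $f^{kp}(x) = x$ and $f^{kp}(y) = y$ for every $k \in \Z$. Next I would invoke the definition of homoclinicity: $d(f^n(x), f^n(y)) \to 0$ as $|n| \to \infty$. Restricting to the times $n = kp$ with $k \to +\infty$ (so that $|n| = kp \to \infty$ since $p \geq 1$), this yields $d(f^{kp}(x), f^{kp}(y)) \to 0$. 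But $d(f^{kp}(x), f^{kp}(y)) = d(x,y)$ for all $k$, so the limit of this constant sequence is $d(x,y)$, forcing $d(x,y) = 0$ and therefore $x = y$.

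I do not expect any real obstacle here: the statement is essentially a one-line consequence of the definitions once a common period is fixed, and the only minor point to record is that $d(x,y)$ does not depend on $k$, so that a nontrivially convergent subsequence is impossible. Note also that the argument is manifestly independent of the choice of compatible metric $d$, consistent with the remark preceding the proposition that the homoclinicity relation does not depend on $d$.
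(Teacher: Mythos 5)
Your proposal is correct and follows essentially the same route as the paper: both pass to a common period (the paper uses $mn$, you use $p_1p_2$) and observe that $d(x,y)=d(f^{kp}(x),f^{kp}(y))$ must vanish by homoclinicity. No issues.
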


\begin{proof}
Since $x$ and $y$ are periodic, there are integers $m, n \geq 1$ such that $f^n(x) = x$ and $f^m(y) = y$.
If $x$ and $y$ are $f$-homoclinic, then, given any $\varepsilon > 0$, 
we have $d(x,y) = d(f^{kmn}(x),d^{kmn}(y)) < \varepsilon$ for $k$ large enough.
This implies $x = y$. 
\end{proof}

\begin{proposition}
\label{p:inamge-homoclinic-equiv}
Let $(X,f)$ and $(Y,g)$ be two dynamical systems.
Suppose that $\psi \colon X \to Y$ is a continuous map such that $\psi \circ f = g \circ \psi$.
If two points in $X$ are $f$-homoclinic, then their images under $\psi$ are $g$-homoclinic. \end{proposition}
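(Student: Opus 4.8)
The plan is to reduce the statement to the uniform continuity of $\psi$. Fix compatible metrics $d_X$ on $X$ and $d_Y$ on $Y$; as recorded above, the homoclinicity relations on $X$ and on $Y$ are independent of these choices. Let $x, x' \in X$ be $f$-homoclinic, so that $d_X(f^n(x),f^n(x')) \to 0$ as $|n| \to \infty$. Iterating the intertwining relation $\psi \circ f = g \circ \psi$ yields $\psi \circ f^n = g^n \circ \psi$ for every $n \in \Z$, whence $g^n(\psi(x)) = \psi(f^n(x))$ and $g^n(\psi(x')) = \psi(f^n(x'))$. Thus it suffices to show that $d_Y\bigl(\psi(f^n(x)),\psi(f^n(x'))\bigr) \to 0$ as $|n| \to \infty$.

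For this I would invoke the fact that a continuous map from a compact metric space is uniformly continuous. Hence, given $\varepsilon > 0$, there is $\delta > 0$ such that $d_X(p,q) < \delta$ implies $d_Y(\psi(p),\psi(q)) < \varepsilon$ for all $p,q \in X$. Since $d_X(f^n(x),f^n(x')) \to 0$, there is $N$ such that $d_X(f^n(x),f^n(x')) < \delta$ whenever $|n| \geq N$, and therefore $d_Y\bigl(\psi(f^n(x)),\psi(f^n(x'))\bigr) < \varepsilon$ for all such $n$. As $\varepsilon > 0$ was arbitrary, this shows that $\psi(x)$ and $\psi(x')$ are $g$-homoclinic, which is the claim.

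There is essentially no serious obstacle here: the one ingredient beyond routine bookkeeping is the compactness of $X$, which is exactly what upgrades the continuity of $\psi$ to \emph{uniform} continuity — pointwise continuity alone would not suffice, since the points $f^n(x)$ and $f^n(x')$ range over an infinite subset of $X$ and one needs a single $\delta$ working along the whole orbit pair. I would also note in passing that the conclusion is manifestly metric-independent, in accordance with the intrinsic nature of homoclinicity classes observed just above.
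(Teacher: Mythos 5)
Your proof is correct and follows the same route as the paper: rewrite $g^n(\psi(x))$ as $\psi(f^n(x))$ via the intertwining relation and conclude by the uniform continuity of $\psi$, which holds since $X$ is compact. You merely spell out the $\varepsilon$--$\delta$ details that the paper leaves implicit.
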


\begin{proof}
Suppose that the points $x,y \in X$ are $f$-homoclinic. 
Let $d$ (resp.~$d'$) be  a metric on $X$ (resp.~$Y$) that is compatible with the topology.
Then, we have that 
$$
d'(g^n(\psi(x)),g^n(\psi(y))) 
= d'(\psi(f^n(x)),\psi(f^n(y))) \to 0
$$
 as $|n| \to \infty$
since $d(f^n(x),f^n(y)) \to 0$ as 
$|n| \to \infty$ and $\psi$ is uniformly continuous.
This shows that the points $\psi(x)$ and $\psi(y)$ are $g$-homoclinic.
\end{proof}

\subsection{Hyperbolic toral automorphisms}
Consider the $n$-dimensional torus  $\T^n := \R^n/\Z^n$.
For $x \in \R^n$, we write $\overline{x} := x + \Z^n \in \T^n$.
\par
Let $\M_n(\Z)$ denote the ring of $n \times n$ matrices with integral entries.
Every matrix $A \in \M_n(\Z)$ induces a differentiable group endomorphism
 $f_A \colon \T^n \to \T^n$ given by 
$f_A(\overline{x}) = \overline{Ax}$ for all $x \in \R^n$.
One says that $f_A$ is the \emph{toral endomorphism} associated with $A$.
\par
The group of invertible elements of  $\M_n(\Z)$ is the group $\GL_n(\Z)$ of $n \times n$ matrices with integral entries and determinant $\pm 1$.
If $A \in \GL_n(\Z)$, then $f_A$ is a differentiable automorphism  of $\T^n$  and one says that   $f_A$ is the  
\emph{toral automorphism} associated with $A$.
If $f$ is a toral automorphism of $\T^n$,  the homoclinicity class of $\overline{0}$ is a subgroup of $\T^n$,  called the \emph{homoclinicity group} of  $f$, and denoted by $\Delta(f)$ (cf.~\cite{lind-schmidt}).
Note that two points  $p,q \in \T^n$ are $f$-homoclinic if and only if $p - q \in \Delta(f)$.
Observe  also that every point in $\Q^n/\Z^n$ is $f$-periodic so that
$\Delta(f) \cap \Q^n/\Z^n = \{\overline{0}\}$ by Proposition~\ref{p:periodic-homoclinic}.
\par
A matrix $A \in \GL_n(\Z)$ is called \emph{hyperbolic} if its complex spectrum does not meet the unit circle.
A diffeomorphism $f$ of $\T^n$ is called a
\emph{hyperbolic automorphism}  if there is a hyperbolic matrix  
 $A \in  \GL_n(\Z)$ such that $f = f_A$.
 \par
There are hyperbolic automorphisms on any torus of dimension $n \geq 2$
and every hyperbolic automorphism is Anosov.
On the other hand, by  the Franks-Manning theorem (cf.~\cite{franks} and \cite{manning}), 
every Anosov diffeomorphism of $\T^n$ is topologically conjugate to a hyperbolic automorphism. 

\section{Proof of the main result}
\label{sec:proof}

\begin{proof}[Proof of Theorem~\ref{t:anosov-torus}]
By the Franks-Manning theorem mentioned above, we can assume that $f$ is a hyperbolic automorphism of $\T^n$.
Let $A \in \GL_n(\Z)$ be a hyperbolic matrix such that $f = f_A$.
Let $\Delta(f) \subset \T^n$ denote the homoclinicity group of $f$.
\par
Let $\tau \colon \T^n \to \T^n$ be a continuous map commuting with $f$.
Since $f$ is a hyperbolic automorphism of $\T^n$ and $\tau$ commutes with $f$,
it follows from \cite[Theorem~2]{walters} that   $\tau$ is an affine toral endomorphism, that is,
there is a matrix $B \in \M_n(\Z)$ and $c \in \R^n$ such that
$\tau(\overline{x}) = \overline{Bx +c}$ for all $x \in \R^n$. 
Note that the map $\overline{x} \mapsto \tau(\overline{x}) - \overline{c}$ is a group endomorphism of $\T^n$.
\par
Suppose first that $\tau$ is surjective.
We claim that $\det(B) \not= 0$.
Indeed, otherwise, the image  of $\R^n$ under the affine map  $x \mapsto Bx+c$ would be an affine subspace
$L \subset \R^n$ with empty interior and  we would deduce from the Baire category theorem that
$L + \Z^n \subsetneqq \R^n$,  which  would contradict the surjectivity of $\tau$.
Thus, we have  $\det(B) \not= 0$ and hence $B \in \GL_n(\Q)$.
\par
Let $x, y \in \R^n$ such that the points $\overline{x}$ and $\overline{y}$ are $f$-homoclinic and satisfy
$\tau(\overline{x}) = \tau(\overline{y})$.
We then have   $B(x - y) \in \Z^n$ and hence  $x -y \in \Q^n$.
This implies that the point $\overline{x - y} $ is $f$-periodic.
On the other hand, since the points $\overline{x}$ and $\overline{y}$ are
$f$-homoclinic,  we have that  $\overline{x - y} = \overline{x} - \overline{y} \in \Delta(f)$.
By applying Proposition~\ref{p:periodic-homoclinic}, we deduce that $\overline{x} - \overline{y} = \overline{0}$ and therefore $\overline{x} =
\overline{y}$.
This shows that $\tau$ is pre-injective and hence that $(\T^n,f)$ has the Moore property.
\par
It remains to show that $(\T^n,f)$ has the Myhill property.
So, let us assume now that $\tau$ is pre-injective.
Since $f$ is a hyperbolic automorphism, it is known that the group $\Delta(f)$ is isomorphic to $\Z^n$
(see \cite[Example~3.3]{lind-schmidt}).
On the other hand, since $\tau(\overline{0}) = \overline{c}$,  we have that
$\tau(\Delta(f)) - \overline{c} \subset \Delta(f)$ by Proposition~\ref{p:inamge-homoclinic-equiv}.
As the restriction of $\tau$ to $\Delta(f)$ is injective by our pre-injectivity hypothesis, we deduce that
$\tau(\Delta(f)) - \overline{c}$ is a finite-index subgroup of $\Delta(f)$.
It is also known that $\Delta(f)$ is dense in $\T^n$ (see again \cite[Example~3.3]{lind-schmidt}).
Consider now  the closure $C \subset \T^n$  of $\tau(\Delta(f))  - \overline{c}$.
As $C$ is a closed subgroup of $\T^n$ and hence a torus, we must have $C = \T^n$ since otherwise the group $\Delta(f)$ would be contained in the
union of a finite number of translates of a torus of dimension less than $n$ and then could not be dense in $\T^n$.
It follows that the closure of $\tau(\Delta(f))$ is also equal to $\T^n$.
By continuity, this shows that $\tau$ is surjective.
 Consequently, $(\T^n,f)$ has the Myhill property.
\end{proof}

\newpage
\section{Concluding remarks}
\subsection{Examples of non-injective pre-injective endomorphisms}
\label{ss:injective-not-pre}
Injectivity trivially implies pre-injectivity
for endomorphisms of dynamical systems $(X,f)$.
However, the converse is false if $f$ is an Anosov diffeomorphism of $\T^n$.
Indeed, if $f$ is a hyperbolic automorphism of $\T^n$ and $m \in \Z$ satisfies $|m| \geq 2$,
then  multiplication by $m$ on $\T^n$ is an endomorphism of $(\T^n,f)$  that is surjective and hence pre-injective but not injective (its kernel has cardinality $|m|^n$). 
\par
This can be generalized in the following way. 
Let $f_i$ be a hyperbolic automorphism of the $n_i$-dimensional torus
$\T^{n_i}$, where $n_i \geq 2$ and $1 \leq i \leq k$.
Then $f := f_1 \times \dots \times f_k$ is a hyperbolic toral automorphism of the $N$-torus
$$
\T^N = \T^{n_1} \times \dots \times \T^{n_k},
$$
where $N := n_1 + \dots + n_k$.
Let us fix some non-zero integers  $m_i \in \Z$, $1\leq i \leq k$, with $|m_i| \geq 2$ for at least one $i$, and consider the endomorphism $\tau$ of $(\T^N,f)$ defined by
$$
\tau(x) := (m_1 x_1,\dots,m_k x_k)
$$
for all $x = (x_1,\dots,x_k) \in \T^N$.
Clearly $\tau$  is surjective and hence pre-injective. 
On the other hand, the kernel of $\tau$  has cardinality $|m_1|^{n_1}  \cdots  |m_k|^{n_k}$
and therefore $\tau$ is not injective.

\subsection{Ergodic toral automorphisms}
Let $A \in \GL_n(\Z)$ and $f_A \colon \T^n \to \T^n$ the associated toral automorphism. It is well known (see e.g.\ \cite[Proposition
24.1]{dgs_ergodic-theory}) that $f_A$ is ergodic (with respect to the Lebesgue measure on $\T^n$) if an only if $A$ has no eigenvalues which are
roots of unity.
This implies in particular that every hyperbolic toral automorphism of $\T^n$ is ergodic.
Observe that  the proof of the Moore property for hyperbolic toral automorphisms given in
Section~\ref{sec:proof}  applies verbatim to ergodic toral automorphisms.
Indeed, a continuous map $\tau \colon \T^n \to \T^n$ commuting with an ergodic toral automorphism $f_A$ is necessarily affine (cf.
\cite[Theorem~2]{walters}).
We thus have the following:

\begin{proposition}
Let $f \colon \T^n \to \T^n$ be an ergodic toral automorphism.
Then the dynamical system $(\T^n,f)$ has the Moore property.
In other words, if $\tau \colon \T^n \to \T^n$ is a surjective continuous map commuting with $f$, then the restriction of
$\tau$ to each $f$-homoclinicity class is injective. \qed
\end{proposition}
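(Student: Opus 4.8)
The plan is to follow, almost word for word, the proof of the Moore property for hyperbolic toral automorphisms carried out in Section~\ref{sec:proof}, appealing to hyperbolicity only where it is genuinely needed — which, it will turn out, is nowhere. The single structural input is Walters' theorem: since $f = f_A$ is an ergodic toral automorphism (equivalently, $A \in \GL_n(\Z)$ has no eigenvalue that is a root of unity) and $\tau$ commutes with $f$, \cite[Theorem~2]{walters} guarantees that $\tau$ is an affine toral endomorphism, so that there are $B \in \M_n(\Z)$ and $c \in \R^n$ with $\tau(\overline{x}) = \overline{Bx + c}$ for all $x \in \R^n$. Ergodicity, not hyperbolicity, is what Walters' hypothesis requires here.

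Next I would record that $\det(B) \neq 0$. This is exactly the Baire category argument from Section~\ref{sec:proof} and uses nothing about $f$ beyond surjectivity of $\tau$: were $\det(B) = 0$, the image of the affine map $x \mapsto Bx + c$ would be a proper affine subspace $L \subset \R^n$, hence $L + \Z^n \subsetneqq \R^n$ by the Baire category theorem, contradicting the surjectivity of $\tau$. So $B \in \GL_n(\Q)$.

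Then I would prove pre-injectivity. Suppose $x, y \in \R^n$ are such that $\overline{x}$ and $\overline{y}$ are $f$-homoclinic and $\tau(\overline{x}) = \tau(\overline{y})$. Then $B(x - y) \in \Z^n$, and since $B \in \GL_n(\Q)$ this forces $x - y \in \Q^n$, so $\overline{x - y} \in \Q^n/\Z^n$ is an $f$-periodic point (for each $q \geq 1$ the automorphism $f_A$ permutes the finite set of points of $\T^n$ whose coordinates have denominator dividing $q$, because $A$ has integral entries and is invertible). On the other hand $\overline{x} - \overline{y} = \overline{x-y}$ is $f$-homoclinic to $\overline{0}$, which is itself $f$-periodic. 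Applying Proposition~\ref{p:periodic-homoclinic} to the pair $(\overline{x - y}, \overline{0})$ yields $\overline{x - y} = \overline{0}$, i.e.\ $\overline{x} = \overline{y}$. Hence the restriction of $\tau$ to every $f$-homoclinicity class is injective, which is the assertion.

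The only point where any care is needed — and hence what I expect to be the main (and fairly mild) obstacle — is verifying that none of the three steps secretly used hyperbolicity. Walters' theorem is the essential one, and it holds for all ergodic toral automorphisms; the other two ingredients (the Baire argument for $\det(B) \neq 0$, and the periodicity of rational points combined with Proposition~\ref{p:periodic-homoclinic}) are purely formal and make no reference to the spectrum of $A$. In particular, in contrast to the Myhill direction, no description of the homoclinicity group $\Delta(f)$ is required here: one uses only that a point which is simultaneously periodic and homoclinic to a periodic point must coincide with it. Consequently the hyperbolic proof transfers verbatim, and $(\T^n, f)$ has the Moore property.
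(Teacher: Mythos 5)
Your proposal is correct and follows essentially the same route as the paper, which simply observes that Walters' theorem holds for all ergodic toral automorphisms and that the Moore argument of Section~\ref{sec:proof} (Baire category for $\det(B)\neq 0$, rationality and hence periodicity of $\overline{x-y}$, then Proposition~\ref{p:periodic-homoclinic}) carries over verbatim. Your added remarks — that rational points are periodic because $A$ is an integral invertible matrix, and that homoclinicity of $\overline{x-y}$ to $\overline{0}$ replaces the explicit use of $\Delta(f)$ — are exactly the translation-invariance facts implicit in the paper's argument.
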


We know that hyperbolic toral automorphisms also satisfy the Myhill property by Theorem~\ref{t:anosov-torus}.
For ergodic toral endomorphism, however, the Myhill property fails to hold in general.
Consider for instance the matrix
\[
A = \begin{pmatrix}
0 & 0 & 0 & 1\\
-1 & 0 & 0 & 2\\
0 & -1 & 0 & 1\\
0 & 0 & -1 & 2
\end{pmatrix} \in \GL_4(\Z).
\]
Its eigenvalues are
\[
\begin{split}
\lambda_1 & = \frac{1}{2} - \frac{1}{\sqrt{2}} + i\frac{\sqrt{\sqrt{8}+1}}{2}\\
\lambda_2 & = \frac{1}{2} - \frac{1}{\sqrt{2}} - i\frac{\sqrt{\sqrt{8}+1}}{2}\\
\lambda_3 & = \frac{1}{2} + \frac{1}{\sqrt{2}} - \frac{\sqrt{\sqrt{8}-1}}{2}\\
\lambda_4 & = \frac{1}{2} + \frac{1}{\sqrt{2}} + \frac{\sqrt{\sqrt{8}-1}}{2}
\end{split}
\]
and satisfy $|\lambda_1| = |\lambda_2| = 1$ and $0 < \lambda_3 < 1 < \lambda_4$.
Since none of these eigenvalues is a root of unity, the associated toral automorphism $f_A \colon \T^4 \to \T^4$ is ergodic. On the other hand,
the characteristic polynomial $\chi_A(x) = x^4 - 2x^3 + x^2 - 2x +1$ is irreducible over $\Q$.
It follows that  the homoclinicity group $\Delta(f_A)$ is reduced to $0$ (cf. \cite[Example 3.4]{lind-schmidt}).
Consequently, every endomorphism of the dynamical system $(\T^4,f_A)$ is pre-injective with respect to $f_A$.
Since the zero endomorphism is not surjective, we conclude that $(\T^4,f_A)$ does not have the Myhill property.

\subsection{The Myhill property for elementary basic sets}
Let us first recall some definitions.
\par
Let $f$ be a homeomorphism of a compact metrizable space $X$. 
One says that the dynamical system $(X,f)$  is \emph{expansive} if there exists a constant 
$\delta  > 0$ such that, for every pair of distinct points $x,y \in X$, there exists 
$n = n(x,y)  \in \Z$ such that
$d(f^n(x),f^n(y)) \geq  \delta$ (here $d$ denotes any metric on $X$ that is compatible with the topology).
One says that the dynamical system $(X,f)$ is \emph{topologically mixing} if
for any pair of non-empty open subsets 
$U,V \subset X$, there is  an integer $N = N(U,V) \in \Z$ such that $f^n(U)$ meets $V$ for all   $n \geq N$.  
\par
One says that the dynamical system $(X,f)$ is a \emph{factor} of the dynamical system 
$(Y,g)$ if there exists a continuous surjective map
$\pi \colon Y \to X$ such that $\pi \circ g = f \circ \pi$.
Such a map $\pi$ is then called a \emph{factor map}.
A factor map $\pi \colon Y \to X$ is said to be \emph{uniformly bounded-to-one} if there is an integer $K \geq 1$ such that each $x \in X$
has at most $K$ pre-images in $Y$.
\par
Let $A$ be a finite set and let $\sigma$ denote the shift homeomorphism on $A^\Z$.
A  $\sigma$-invariant closed subset $\Sigma \subset A^\Z$ is called a
\emph{subshift}.
A subshift $\Sigma \subset A^\Z$ is said to be \emph{of finite type}
if there is an integer $n \geq 1$ and a subset $P \subset A^n$ such that $X$ consists of the  sequences $x = (x_i) \in A^\Z$ that satisfy
$$
(x_i, x_{i + 1}, \dots , x_{i + n - 1}) \in P
$$
for all $i \in \Z$.
\par
We have the following result.

\begin{theorem}
\label{t:myhill-property}
Let $f \colon X \to X$ be a homeomorphism of a compact metrizable space $X$. 
Suppose that  the dynamical system $(X,f)$ is expansive 
and that there exist a finite set $A$, a topologically mixing subshift of finite type
$\Sigma \subset A^\Z$, and a uniformly bounded-to-one factor map 
$\pi \colon \Sigma \to X$.
Then the dynamical system $(X,f)$ has the Myhill property.
\end{theorem}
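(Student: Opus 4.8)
The plan is to prove only the Myhill implication, which is all the theorem asserts, and to argue by contraposition: starting from a continuous map $\tau\colon X\to X$ commuting with $f$ that is \emph{not} surjective, I will produce two distinct $f$-homoclinic points of $X$ having the same image under $\tau$. The engine is the counting argument from the Myhill half of the Garden of Eden theorem for topologically mixing subshifts of finite type (cf.\ \cite{Fiorenzi-strongly}, \cite{fiorenzi-sofic}), transported up to the Markov cover $\pi\colon\Sigma\to X$.

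\emph{Step 1: reduction and the entropy gap.} Since $(X,f)$ is expansive, I would first replace it by a topologically conjugate subshift, so that $X\subseteq B^{\Z}$ is a subshift, $f$ is the restriction of the shift, and (Curtis--Hedlund--Lyndon) both $\tau$ and $\pi$ are sliding block codes; let $R$ be a radius serving for both, so that $\tau\circ\pi\colon\Sigma\to X$ is a sliding block code whose image is the subshift $X':=\tau(X)$, proper by hypothesis. Then $\Sigma':=\pi^{-1}(X')$ is a proper subshift of the topologically mixing subshift of finite type $\Sigma$, hence $h_{\mathrm{top}}(\Sigma',\sigma)<h_{\mathrm{top}}(\Sigma,\sigma)$ by the standard fact that omitting an allowed word strictly lowers the entropy of an irreducible subshift of finite type (see \cite{lind-marcus}). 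Since $\pi$ is uniformly bounded-to-one it preserves topological entropy, both globally and in restriction to $\Sigma'$, so
\[
h_{\mathrm{top}}(X',f)=h_{\mathrm{top}}(\Sigma',\sigma)<h_{\mathrm{top}}(\Sigma,\sigma)=h_{\mathrm{top}}(X,f).
\]

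\emph{Step 2: counting and closing up.} Writing $p_n(\cdot)$ for the number of admissible words of length $n$, one has $p_n(\Sigma)=e^{(h_{\mathrm{top}}(\Sigma)+o(1))n}$ whereas $\tau\circ\pi$ sends length-$n$ words of $\Sigma$ into words of $X'$ of length $n-O(1)$, of which there are at most $e^{(h_{\mathrm{top}}(X')+o(1))n}$. As the entropy gap is strictly positive, pigeonholing yields, for all large $n$, exponentially many length-$n$ words of $\Sigma$ that $\tau\circ\pi$ carries onto one common word of $X'$; discarding a sub-exponential fraction, I may also assume that all of them begin and end with one fixed word $\rho$. Now I would use the topological mixing of $\Sigma$ to fix, once and for all, a left context $L$ and a right context $R_\ast$ compatible with $\rho$, and for each surviving word $u$ form the point $\widetilde x_u:=L\,u\,R_\ast\in\Sigma$. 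For two distinct such words $u\neq v$: the configurations $\widetilde x_u$ and $\widetilde x_v$ differ only on a single bounded window, hence are $\sigma$-homoclinic; and $\tau\circ\pi$ takes the same value on both of them — off the window because $L$ and $R_\ast$ are shared, on the bulk of the window because $u$ and $v$ were collapsed, and on the collars near the two endpoints because $u$ and $v$ share the end-block $\rho$. Since $\pi$ is at most (say) $K$-to-one while the $\widetilde x_u$ are pairwise distinct and exponentially numerous, I may pick $u\neq v$ with $\pi(\widetilde x_u)\neq\pi(\widetilde x_v)$. By Proposition~\ref{p:inamge-homoclinic-equiv} the points $\pi(\widetilde x_u)$ and $\pi(\widetilde x_v)$ are $f$-homoclinic, and they are distinct with $\tau(\pi(\widetilde x_u))=\tau(\pi(\widetilde x_v))$; thus $\tau$ fails to be pre-injective, which is what was wanted.

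\emph{Where the difficulty lies.} Step 1 is routine. The real work is Step 2, and it has two faces. First, the counting must be made rigorous: this uses that a uniformly bounded-to-one factor code of a subshift of finite type has sub-exponentially many word-preimages, and that pinning down bounded end-blocks costs only a sub-exponential factor in an irreducible subshift of finite type. Second, and this is the crux, the ``closing up'' must be arranged so that the two configurations $\widetilde x_u,\widetilde x_v$ simultaneously (i) agree off a bounded window, (ii) have the same image under $\tau\circ\pi$, and (iii) still have distinct images under $\pi$; it is precisely here that topological mixing of $\Sigma$ (to build the shared contexts) and the bounded-to-one hypothesis on $\pi$ (so that the $\widetilde x_u$ do not all collapse under $\pi$) enter. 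In essence this is the Myhill implication for mixing subshifts of finite type, pushed down through the Markov cover.
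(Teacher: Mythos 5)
There is a genuine gap, and it sits at the very first move. You claim that since $(X,f)$ is expansive you may ``replace it by a topologically conjugate subshift.'' That is false in general: a subshift is zero-dimensional, while an expansive system need not be. The main application of this theorem in the paper is exactly of that kind --- a hyperbolic toral automorphism (or an elementary basic set which is a manifold piece) is expansive but lives on a space of positive dimension, so $(X,f)$ is not conjugate to any subshift. Everything downstream of that reduction (Curtis--Hedlund--Lyndon for $\tau$, ``words of $X'$ of length $n$,'' the sliding-block description of $\tau\circ\pi$, and the collar/end-block bookkeeping in Step 2) presupposes that $X$ itself is a subshift, so the argument as written does not apply to the systems the theorem is about. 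Expansiveness only gives you a weaker, quantitative substitute: a uniform ``window'' scale $\delta$ with which one can count $(n,\delta)$-separated sets in $X$ and control when two points of $X$ are equal or homoclinic; the entropy comparison and the closing-up construction have to be carried out upstairs in $\Sigma$ and transported through $\pi$ using this, not by coding $X$ itself.

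For comparison, the paper does not prove the statement from scratch at all: it observes that $\Z$ is amenable and that a topologically mixing subshift of finite type over $\Z$ is strongly irreducible, and then quotes \cite[Theorem~1.1]{csc-myhyp}, whose proof is essentially the program you sketch in Step 2 (entropy gap for a proper subsystem of a strongly irreducible SFT, preservation of entropy under the uniformly bounded-to-one factor map, a pigeonhole/closing-up argument producing homoclinic pairs, and the bounded-to-one hypothesis to prevent the constructed points from collapsing under $\pi$) --- but executed directly against the expansive system $(X,f)$ rather than against a hypothetical subshift presentation of it. So your Step 2 has the right flavour, but your Step 1 must be replaced either by that more careful transfer through $\pi$ or simply by invoking the cited theorem; as it stands the proof is not valid.
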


\begin{proof}
This is a special case of \cite[Theorem~1.1]{csc-myhyp}
since the group $\Z$ is amenable and every topologically mixing subshift of finite type over $\Z$ is strongly irreducible.
\end{proof}

Note that there exist dynamical systems $(X,f)$
satisfying all the hypotheses of Theorem~\ref{t:myhill-property}
that do not have the Moore property.
An example of such a dynamical system is provided by the \emph{even subshift}, that is, the subshift $X \subset \{0,1\}^\Z$ consisting of all bi-infinite sequences of $0$s and $1$s with an even number of $0$s between any two $1$s.
Indeed, if $\Sigma \subset \{0,1\}^\Z$ denotes the \emph{golden subshift}, that is, the subshift consisting of all bi-infinite sequences of $0$s and $1$s with no consecutive $1$s,  
it is known that $\Sigma$ is a topologically mixing subshift of finite type  and 
  that there is a factor map $\pi \colon \Sigma \to X$ such that   each configuration in $X$ has at most  $2$ pre-images in $\Sigma$
(see e.g. \cite[Example~4.1.6]{lind-marcus}).
Thus, the even subshift satisfies the hypotheses of Theorem~\ref{t:myhill-property}. 
On the other hand, Fiorenzi \cite[Section~3]{fiorenzi-sofic} 
proved that the even subshift does not have the Moore property.
\par
Let $M$ be a compact $C^r$-differentiable ($r \geq 1$) manifold and $f$ a $C^1$-diffeomorphism of $M$.
One says that a closed $f$-invariant subset $\Lambda \subset M$ is a \emph{hyperbolic set}
if the restriction to $\Lambda$ of the 
tangent bundle of $M$ continuously splits as a direct sum of two invariant subbundles $E_s$ and $E_u$ such that, with respect to some (or equivalently any) Riemannian metric on $M$, the differential $df$ of $f$ is uniformly contracting on $E_s$ and uniformly expanding on $E_u$, i.e., there are constants 
$C > 0$ and $0 < \lambda < 1$ such that
$\Vert df^n(v) \Vert \leq C\lambda^n \Vert v \Vert$
and
$\Vert df^{-n}(w) \Vert \leq C \lambda^n \Vert w \Vert$
for all $x \in \Lambda$, $v \in E_s(x)$,  $w \in E_u(x)$, and $n \geq 0$.
Thus, $f$ is an Anosov diffeomorphism if and only if the whole manifold $M$ is a hyperbolic set for $f$.
A point $x \in M$ is called \emph{non-wandering} if for every neighborhood $U$ of $x$, there is an integer $n \geq 1$ such that $f^n(U)$ meets $U$.
The set $\Omega(f)$ consisting of all non-wandering points of $f$ is a closed invariant subset of $M$.
One says that $f$ satisfies Smale's  \emph{Axiom A}  if the set $\Omega(f)$  is hyperbolic and the periodic points of $f$ are dense in $\Omega(f)$
(cf.~\cite{smale}).
If $f$ is Axiom A, then $\Omega(f)$ can be uniquely written as a disjoint union of closed invariant subsets $\Omega(f) = X_1 \cup \dots \cup X_k$, such that the restriction of $f$ to  each $X_i$ is topologically transitive for $1 \leq i \leq k$ (spectral decomposition theorem).
These subsets $X_i$ are called the \emph{basic sets} of $(M,f)$.
A basic set $X_i$ is called \emph{elementary} if the restriction of $f$ to $X_i$ is topologically mixing.

\begin{corollary}
\label{cor:goe-hyperbolic-set}
Let $f$ be a $C^1$-diffeomorphism of a compact $C^r$-differentiable ($r \geq 1$) manifold $M$
that satisfies Axiom A.
Suppose  that $X$ is an elementary basic set of $(M,f)$
and let $f\vert_X \colon X \to X$ denote the restriction of $f$ to $X$.
Then the dynamical system $(X,f\vert_X)$ has the Myhill property. 
\end{corollary}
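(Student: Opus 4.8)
The plan is to deduce the corollary from Theorem~\ref{t:myhill-property} by verifying that the dynamical system $(X, f\vert_X)$ satisfies all of its hypotheses: expansiveness, together with the existence of a uniformly bounded-to-one factor map onto $X$ from a topologically mixing subshift of finite type. Once these three properties are established, Theorem~\ref{t:myhill-property} applies verbatim and yields the Myhill property for $(X, f\vert_X)$.

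First I would recall that the restriction of a $C^1$-diffeomorphism to a hyperbolic set is expansive. Since $X$ is a basic set of an Axiom A diffeomorphism, it is in particular a compact $f$-invariant hyperbolic subset of $M$; the uniform contraction/expansion of $df$ on $E_s \oplus E_u$ together with the local product structure forces a positive expansivity constant for $f\vert_X$. This is classical (see e.g.\ \cite{brin-stuck}, \cite{kh-modern-theory-ds}).

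Next comes the heart of the matter, namely the existence of Markov partitions. By Bowen's construction of Markov partitions for basic sets of Axiom A diffeomorphisms, the basic set $X$ admits a Markov partition, which produces a finite set $A$, a subshift of finite type $\Sigma \subset A^\Z$, and a factor map $\pi \colon \Sigma \to X$ (so that $\pi \circ \sigma = f\vert_X \circ \pi$) which is uniformly bounded-to-one: there is an integer $K \geq 1$, bounded in terms of the number of rectangles of the partition, such that every point of $X$ has at most $K$ pre-images in $\Sigma$. Moreover, since $X$ is an \emph{elementary} basic set, i.e., $f\vert_X$ is topologically mixing, the subshift of finite type $\Sigma$ may be chosen to be topologically mixing as well: the transition graph associated with the Markov partition is irreducible because $f\vert_X$ is topologically transitive, and it has period~$1$ because $f\vert_X$ is topologically mixing, so no power of $f\vert_X$ splits $X$ into cyclically permuted clopen pieces. (See \cite{brin-stuck}, \cite{kh-modern-theory-ds}, \cite{lind-marcus} for these facts on Markov partitions and symbolic dynamics.)

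Assembling these ingredients, $(X, f\vert_X)$ is expansive, $\Sigma$ is a topologically mixing subshift of finite type, and $\pi \colon \Sigma \to X$ is a uniformly bounded-to-one factor map, so Theorem~\ref{t:myhill-property} gives the conclusion. The main obstacle here is not conceptual but one of invoking the right machinery: the construction of Markov partitions for Axiom A basic sets is a substantial classical result, and one needs the (also classical) fact that topological mixing of $f\vert_X$ passes to the associated subshift of finite type; granting these, the corollary follows at once.
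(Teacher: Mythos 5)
Your proposal is correct and follows essentially the same route as the paper: verify expansivity of $f\vert_X$ and use Bowen's Markov partitions to obtain a topologically mixing subshift of finite type with a uniformly bounded-to-one factor map onto $X$, then apply Theorem~\ref{t:myhill-property}. The paper simply cites Bowen's original articles (\cite[Lemma~3, Theorem~28, Proposition~30]{bowen-markov-1970} and \cite[Proposition~10]{bowen-markov-minimal_AJM-1970}) for exactly the facts you sketch, including that mixing of $f\vert_X$ yields a mixing symbolic cover.
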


\begin{proof}
The fact that the dynamical system $(X,f\vert_X)$ satisfies the hypothesis of 
Theorem~\ref{t:myhill-property} 
follows from the classical work of Rufus Bowen on Axiom A diffeomorphisms.
The expansivity of $f\vert_X$ is shown in \cite[Lemma~3]{bowen-markov-1970}.
On the other hand, Bowen used a Markov partition to show that
 one can find  a finite set $A$ (the set of rectangles of the Markov partition) and    a topologically mixing subshift of finite type 
 $\Sigma \subset A^\Z$
such that there exists  a uniformly bounded-to-one factor map $\pi \colon \Sigma \to X$
(cf. \cite[Theorem~28 and Proposition~30]{bowen-markov-1970} and 
\cite[Proposition~10]{bowen-markov-minimal_AJM-1970}).
\end{proof}

It is an open question whether every Anosov diffeomorphism  is topologically mixing. 
However, for an Anosov diffeomorphism $f \colon M \to M$,    the following conditions are known to be equivalent 
(see e.g.~\cite[Theorem 5.10.3]{brin-stuck}):
(a) $f$ is topologically mixing;
(b) $f$ is topologically transitive;
(c) every point in $M$ is non-wandering; 
(d) the periodic points of $f$ are dense in $M$.
This implies in particular  that every topologically mixing Anosov diffeomorphism $f$ of a compact manifold 
$M$ is Axiom A with $\Omega(f) = M$. Thus,
as a particular case of Corollary~\ref{cor:goe-hyperbolic-set}, we get the following result, which
extends the Myhill part of Theorem~\ref{t:anosov-torus} to all topologically mixing Anosov diffeomorphisms.

\begin{corollary}
\label{cor:goe-anosov}
If $f$ is a topologically mixing Anosov diffeomorphism of a compact $C^r$-differentiable ($r \geq 1$) manifold $M$,
then the dynamical system $(M,f)$ has the Myhill property.
\qed
\end{corollary}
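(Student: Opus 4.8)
The plan is to derive this directly from Corollary~\ref{cor:goe-hyperbolic-set}, by checking that the whole manifold $M$ is an elementary basic set of $(M,f)$ in the sense required there; the proof is therefore short, being essentially a bookkeeping argument on top of the preceding corollary and the quoted list of equivalences. First I would observe that since $f$ is a topologically mixing Anosov diffeomorphism of $M$, the equivalence of conditions (a)--(d) recalled just above gives that every point of $M$ is non-wandering, so $\Omega(f) = M$, and that the periodic points of $f$ are dense in $M$. Because $M$ is itself a hyperbolic set for $f$ (this being exactly the definition of an Anosov diffeomorphism), it follows that $f$ satisfies Axiom~A with $\Omega(f) = M$.

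Next I would apply the spectral decomposition theorem to write $\Omega(f) = M$ as a disjoint union $X_1 \cup \dots \cup X_k$ of closed invariant sets on each of which $f$ acts topologically transitively. Since topological mixing of $f$ on $M$ implies topological transitivity of $f$ on $M$, and the pieces $X_i$ of the spectral decomposition are the closures of the transitivity classes, this union must reduce to a single piece, i.e.\ $k = 1$ and $X_1 = M$. (Equivalently, one may note that $M$ is connected while the $X_i$ are pairwise disjoint closed invariant sets whose union is $M$.) Hence $M$ is a basic set of $(M,f)$, and since $f\vert_M = f$ is topologically mixing by hypothesis, this basic set is elementary.

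Finally, applying Corollary~\ref{cor:goe-hyperbolic-set} with $X = M$ and $f\vert_X = f$ yields that $(M,f)$ has the Myhill property, as claimed. The only step that requires even minimal care is the reduction of the spectral decomposition to a single basic set, where topological transitivity of $f$ on all of $M$ (or, if one prefers, connectedness of $M$) is used; everything else is a direct citation of the equivalences (a)--(d) and of Corollary~\ref{cor:goe-hyperbolic-set}, so I do not anticipate any genuine obstacle.
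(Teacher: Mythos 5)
Your proposal is correct and follows essentially the same route as the paper: the paper also invokes the equivalences (a)--(d) to conclude that a topologically mixing Anosov diffeomorphism is Axiom A with $\Omega(f)=M$, so that $M$ is a single elementary basic set, and then cites Corollary~\ref{cor:goe-hyperbolic-set}. Your explicit verification that the spectral decomposition reduces to the single piece $M$ merely spells out what the paper leaves implicit.
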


\subsection{Zero-dimensional basic sets}
If $X$ is a zero-dimensional basic set of an Axiom A diffeomorphism $f$, it follows from \cite[Theorem~6.6]{bowen-top-entropy-axiomA} that $(X,f\vert_X)$ is topologically conjugate to an irreducible subshift of finite type.
As every irreducible subshift of finite type has the Moore-Myhill property by 
\cite[Corollary~2.19]{fiorenzi-sofic},
we deduce that $(X,f\vert_X)$ has the Moore-Myhill property.
\par
In view of   this observation and of   Theorem~\ref{t:anosov-torus},
it is very tempting to conjecture that 
the restriction of an Axiom A diffeomorphism  to a (possibly non-elementary) basic set  always has the 
Moore-Myhill property.
 
\bibliographystyle{siam}

\end{document}